\newcommand{\ba}{\begin{array}}
\newcommand{\ea}{\end{array}}
\newcommand{\bdm}{\begin{displaymath}}
\newcommand{\edm}{\end{displaymath}}
\newcommand{\be}{\begin{equation}}
\newcommand{\ee}{\end{equation}}
 \newcommand{\bea}{\begin{eqnarray}}
   \newcommand{\eea}{\end{eqnarray}}
 \newcommand{\beax}{\begin{eqnarray*}}
 \newcommand{\eeax}{\end{eqnarray*}}
\title{Essential spectral equivalence via multiple step preconditioning and applications to  ill conditioned Toeplitz matrices }
\author{D. Noutsos\footnotemark[1] \and  S. Serra-Capizzano\footnotemark[2]\and P. Vassalos\footnotemark[3] \footnotemark[4]}
\begin{document}
\pagenumbering{arabic} \setcounter{page}{1}

\maketitle
\begin{abstract}
In this note, we study   the fast solution of Toeplitz linear
systems with coefficient matrix  $T_n(f)$, where the generating
function $f$ is nonnegative and has a unique zero at zero of any
real positive order $\theta$. As preconditioner we choose a matrix
${\tau}_n(f)$ belonging to the so-called $\tau$ algebra, which is
diagonalized by the sine transform associated to the discrete
Laplacian. In previous works, the spectral equivalence of the
matrix sequences $\{{\tau}_n(f)\}_n $ and $\{T_n(f) \}_n$ was
proven under the assumption that the order of the zero is equal to
$2$: in other words the preconditioned matrix sequence
$\{{\tau}^{-1}_n(f)T_n(f) \}_n $ has eigenvalues, which are
uniformly away from zero and from infinity. Here we prove a
generalization of the above result when $\theta<2$. Furthermore,
by making use of multiple step preconditioning, we show that the
matrix sequences $\{{\tau}_n(f)\}_n $ and $\{T_n(f) \}_n$   are
essentially spectrally equivalent for every $\theta>2$, i.e., for
every $\theta>2$, there exist $m_\theta$ and a positive interval
$[\alpha_\theta,\beta_\theta]$ such that all the eigenvalues of
$\{{\tau}^{-1}_n(f)T_n(f) \}_n $ belong to this interval, except
at most $m_\theta$ outliers larger than $\beta_\theta$. Such a
nice property, already known only  when  $\theta$ is an even
positive integer greater than 2, is coupled with the fact that the
preconditioned sequence has an eigenvalue cluster at one, so that
the convergence rate of the associated preconditioned conjugate
gradient method is optimal. As a conclusion we discuss possible
generalizations and we present selected numerical experiments.

\end{abstract}

\begin{keywords}
Toeplitz, preconditioning, $\tau$ matrices, spectral analysis, PCG method.
\end{keywords}

\begin{AMS}
65F10, 65F15, 65F35
\end{AMS}

\maketitle

\renewcommand{\thefootnote}{\fnsymbol{footnote}}
\footnotetext[1]{Department of Mathematics, University of
Ioannina, GR45110 Greece. ({\tt dnoutsos@uoi.gr})}
\footnotetext[2]{Department of Science and high Technology,
University of Iunsubria, Como, Italy. ({\tt
stefano.serrac@uninsubria.it})} \footnotetext[3]{Department of
Informatics, Athens University of Economics and Business, GR10434
Greece. ({\tt pvassal@aueb.gr})} \footnotetext[4]{This research
has been co-financed by the European Union  (European Social Fund
- ESF) and Greek national funds through the Operational Program
"Education and Lifelong Learning" of the National Strategic
Reference Framework (NSRF) Research Funding Program THALES:
Investing in knowledge society through the European Social Fund.}

\section{Introduction}
\newcommand{\ui}{\mathbf{i}}

Our goal is to design and  analyze a preconditioning technique
for the fast solution of a Toeplitz system with $n\times n$
coefficient matrix $T_n(f)$, where $f$ is a given function having
a unique zero at zero of positive order $\theta$: the entry
$(j,k)$, $1\le j,k\le n$, of the matrix $T_n(f)$ is the $l$-th
Fourier coefficient of $f$ with $l=j-k$ and
\[
a_l = \frac1{2\pi}\int_0^{2\pi}
f(t)e^{-{\ui} l t}\ dt.
\]
The preconditioner is chosen in the so-called $\tau$ algebra which
is the set of all real symmetric matrices diagonalized by the sine
transform associated to the discrete Laplacian (see
(\ref{sine_matrix})): the preconditioner is chosen to have as
eigenvalues a uniform sampling of the symbol $f$ and is denoted by
${\tau}_n(f)$.

We study  the spectrum of the matrix sequences $\{{\cal A}_n\}_n$
with ${\cal A}_n={\tau}^{-1}_n(f)T_n(f)$  with the goal of
localizing the eigenvalues and understanding the asymptotic
behavior. We recall that the study of such a matrix sequence gives
precise information on the convergence speed of the related
preconditioned Conjugate Gradient (PCG) method and the associated
preconditioning strategy can be used in connection with multigrid
schemes: see \cite{cmame1} for the use of fast Toeplitz
preconditioning in the context of a multigrid method for a
Galerkin isogeometric analysis approximation to the solution of
elliptic  partial differential equations. Furthermore, the
analysis of the sequence $\{{\cal A}_n\}_n$ can be helpful in the
development of new approaches as the Jacobi-Davidson method in the
context of eigenvalue problems.

 The problem of understanding the spectrum of $\{{\cal A}_n\}_n$ has been
 extensively studied in the literature, see for example  \cite{M.Ng:book}, \cite{R.Cha/X.Jin:book}
 and references therein,  when the generating function  has  zeros of  even multiplicity.
 Here, to the best of our knowledge,  it is the first time that the general case is considered.
 For the sake of simplicity, we restrict our attention to the case where $f$ has a unique zero at
 zero with positive order $\theta$: it is worthwhile observing that in such a context the band Toeplitz
 preconditioning cannot lead to spectrally equivalent or essentially spectrally equivalent sequences, just because a
 nonnegative trigonometric polynomial cannot have zeros of non even order (see \cite{S.Ser:95} for a discussion on the subject).

The spectral relations  between  Toeplitz and  $\tau$ matrices
have  been analyzed by many researchers. Specifically, the
spectral properties of this algebra are  investigated in
\cite{D.Bin/M.Cap:83} and its approximation features, in
connection with Toeplitz structures, are treated in
\cite{D.Bin/F.DiB:93}. In  \cite{F.Ben/G.Fio/S.Ser:93},
\cite{F.Ben:95}, \cite{T.Huc:96} several $\tau$ preconditioning
techniques are studied, while in \cite{F.Ben/S.Ser:99} the
spectral properties of $\tau$ preconditioned matrices are
considered in detail.

In the quoted literature, in order to perform a theoretical
analysis,  the authors assumed that the generating function has
zeros of even orders.    The novel contribution of this work
relies on the  relaxation of this assumption. Precisely,  we study
the spectral properties of  the matrix sequence $\{{\cal
A}_n\}_n$, by dividing the analysis into two steps: first we
consider the case where the order of the zero is $\theta\in (0,2]$
and then, by using a multiple step preconditioning, we consider
the case  $\theta>2$, which is somehow reduced to the first case.

The paper is organized as follows. \S\ref{sec:prel} contains the
necessary preliminary definitions: in particular we define the
$\tau$ algebra, the preconditioner, and the notion of (essential)
spectral equivalence. In \S\ref{sec:tools} we briefly describe the
tools we use i.e., a special block Toeplitz operator and the
multiple step preconditioning.   The main theoretical statements
of this  paper are  presented and proved  in \S\ref{sec:main}
and concern the assumptions which leads to the (essential)
spectral  equivalence between ill conditioned Toeplitz sequences
and  the associated $\tau$ preconditioners. In \S\ref{sec:numexp}
we report and critically discuss  various numerical experiments,
while  Section \ref{sec:final}  is devoted to concluding remarks
and to potential future extensions.

\section{Preliminaries}\label{sec:prel}

Let $f $ be a nonnegative even function having, for  simplicity, a
single zero at the point $x_0=0$ of order $\theta\in
\mathbb{R}^+$, where $\mathbb{R}^+$ is the set of positive real
numbers,    and let $\{T_n(f)\}_n$ be the related Toeplitz matrix
sequence.  Then, the $\tau$ sequence $\{P_n\}_n$ constructed  as
\be\label{prec_def} P_n=\tau_n(f)=S_n \textup{diag}(f(w^{[n]}))S_n
\ee is considered as a preconditioning sequence for
$\{T_n(f)\}_n$:  here $w^{[n]}$ is the  $n$ dimensional vector
with entries $w_i^{[n]}=\frac{\pi i}{n+1}$, $i=1,\ldots,n$, $S_n$
is the sine-transform matrix defined as \be\label{sine_matrix}
({S_n})_{ij}=\sqrt{\frac{2}{n+1}}\left(\sin(jw^{[n]}_i)\right)_{i,j=1}^{n},
 \ee
and $\textup{diag}(f(w^{[n]}))$ is the diagonal matrix  having as
diagonal entries, the sampling of the values of $f$ on the
specific discretization $w^{[n]}$. Obviously,  $P_n$ is always
positive definite and the same holds true whenever the zero (or
zeros) of the generating function does not coincide with the
discretization points $w^{[n]}$. Under this assumption, the
developed theory of the next section holds unaltered.   We mention
that the $\tau$ matrices constructed in this way  are not the
``Frobenius optimal'' $\tau$ preconditioners
\cite{D.Bin/F.DiB:93}:  they coincide with  the ``natural'' $\tau$
preconditioner only if $f$ is a trigonometric polynomial  (see
e.g. \cite{S.Ser:99}).

The main goal  of this work is to show that the sequences  of
matrices $\{P_n=\tau_n(f)\}_n$ and $\{T_n(f)\}_n$ are spectrally
equivalent  whenever the symbol $f$ has a unique zero at zero of
order $\theta\le 2$. Moreover, when $\theta>2$, the  essential
spectral equivalence between these two sequences of matrices can
be proven. The notions of  spectral and essential spectral
equivalence are reported below.

\begin{definition}\label{def:ess-spectral-equiv}
Given two sequences of positive
definite matrices, $\{A_n\}_n$ and $\{P_n\}_n$  we say that they are spectrally equivalent iff the spectrum
$\{\sigma(P_n^{-1}A_n)\}_n$ of $\{P_n^{-1}A_n\}_n$ belongs to a positive
interval $[\alpha,\beta]$, where $\alpha,  \beta$ are constants independent of $n$ with $0<\alpha\le \beta<\infty$.
We say that the sequences $\{A_n\}_n$ and $\{P_n\}_n$ are essentially
spectrally equivalent iff $\{\sigma(P_n^{-1}A_n)\}_n$ is contained in $[\alpha,\beta]$, with at most a constant number of outliers greater than $\beta$.
\end{definition}

\section{Tools}\label{sec:tools}

As we have mentioned in the introduction, the theoretical tools
that are  used in the literature to prove the (essential) spectral
equivalence between ill conditioned Toeplitz sequences generated
by a symbol having a zero of even order at zero,  and proper
matrix algebra sequences, cannot be applied in our case. Thus, the
main tools for proving  our arguments will be  results coming from
block Toeplitz matrices, properties on Schur complements, the
flexibility of the Rayleigh quotient in the min-max, max-min
characterizations of eigenvalues of Hermitian matrices, and a
general theorem on the multiple step preconditioning. A brief
overview  of  them is presented   in the next subsections.


\subsection{A special block Toeplitz operator}\label{sec:tool1}

Regarding block Toeplitz matrices, we remind that if  $F(t)$ is a
$2\times 2$ matrix-valued function of the form \bdm
F(t)=\left(\begin{array}{cc} f_1(t)&f_2(t)\\
f_3(t)&f_4(t)\end{array}\right),\edm then, the matrix \bdm
B_{2n}(F)=\left(\begin{array}{cc} T_n(f_1)&T_n(f_2)\\
T_n(f_3)&T_n(f_4)\end{array}\right) \edm is a block Toeplitz
matrix. Note that the resulting  structure, and consequently   its
spectral properties, are quite different from the ones of the
scalar and multi-level Toeplitz forms, but there is a strong link
with the one-level Toeplitz matrices generated by a matrix-valued
function. In fact, there exists a simple permutation $\Pi$ such
that
\[
T_n(F)=\Pi B_{2n}(F) \Pi^T
\]
and hence the spectrum of $B_{2n}(F)$ coincides with that of
$T_n(F)$.  Furthermore, from the analysis in
\cite{M.Mir/P.Til:00}, it is known that $T_n(F)$ (and so
$B_{2n}(F)$) is positive semidefinite, whenever the generating
function $F$ is positive semidefinite and, in addition, $T_n(F)$
is positive definite if the minimal eigenvalue of $F$ is not
identically zero; see \cite{S.Ser:99:BIT}. We will use these
properties later on in our main derivations in Theorem
\ref{th:main}.

\subsection{The multiple step preconditioning}\label{sec:tool2}

Consider a linear system with a positive definite coefficient
matrix $A_n$  and suppose we have a chain of positive definite
preconditioners  $P_n^{(0)},\ldots,P_n^{(l)}$ such that
$P_n^{(j+1)}$ is an optimal preconditioner for $P_n^{(j)}$ (i.e.
we have essential spectral equivalence between the two sequences),
$j=0,\ldots,l-1$, $P_n^{(0)}=A_n$.

Then, a natural approach is to use a PCG at the external level
with  coefficient matrix $A_n$ and preconditioner $P_n^{(1)}$.
Furthermore, for all the auxiliary linear systems involving
$P_n^{(1)}$, we use again a PCG method with $P_n^{(2)}$ as
preconditioner and so on. Given the optimal convergence rate of
all the considered PCG methods, it is easy to see that the global
procedure is optimal, but the scheme could lose efficiency already
for moderate values of $l$. Therefore we would like to use the
final preconditioner $P_n=P_n^{(l)}$ directly on the original
system, with coefficient matrix $A_n$. The following theorem gives
a theoretical ground for this choice, showing that $P_n$ is an
optimal preconditioner of $A_n$ if, for every $j=0,\ldots,l-1$,
the matrix  $P_n^{(j+1)}$ is an optimal preconditioner for
$P_n^{(j)}$: this result will be used later on in Theorem
\ref{th:main}.

\begin{theorem}\label{th:gen}
Let $A_n,P_n$ two positive definite matrices of size $n$. Assume there exist positive definite matrices $P_n^{(0)},\ldots,P_n^{(l)}$,
positive numbers $\alpha_0,\ldots,\alpha_{l-1}$, $\beta_0,\ldots,\beta_{l-1}$, integer numbers
$r_0^{-},\ldots,r_{l-1}^{-}$, $r_0^{+},\ldots,r_{l-1}^{+}$, $l\ge 1$, such that
\begin{itemize}
\item $P_n^{(0)}=A_n$, $P_n^{(l)}=P_n$, $\alpha_j\le \beta_j$, $j=1,\ldots,l-1$,
\item the eigenvalues $\left(P_n^{(j+1)}\right)^{-1} P_n^{(j)}$ belong to the interval $[\alpha_j,\beta_j]$ with the exception
of  $r_j^{-}$ outliers less than $\alpha_j$ and of $r_j^{+}$ outliers larger than $\beta_j$, $j=0,\ldots,l-1$.
\end{itemize}

Then, all the eigenvalues of $P_n^{-1} A_n$ belong to the interval $[\alpha,\beta]$, $\alpha=\prod_{j=0}^{l-1} \alpha_j$,
$\beta=\prod_{j=0}^{l-1} \beta_j$, with the exception of $r^-$ outliers less than $\alpha$ and $r^+$ outliers larger that $\beta$,
$r^-=\sum_{j=0}^{l-1} r_j^{-}$, $r^+=\sum_{j=0}^{l-1} r_j^{+}$.
\end{theorem}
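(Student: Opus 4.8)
The plan is to combine the Courant--Fischer min--max characterization of the eigenvalues of a symmetric definite pencil with the elementary multiplicativity of Rayleigh quotients along the chain $P_n^{(0)},\dots,P_n^{(l)}$. Since every $P_n^{(j)}$ is positive definite, for each nonzero vector $x$ we have the telescoping identity
\[
\frac{x^{*}A_n x}{x^{*}P_n x}=\prod_{j=0}^{l-1}\frac{x^{*}P_n^{(j)}x}{x^{*}P_n^{(j+1)}x},
\]
where the $j$-th factor is a Rayleigh quotient of the pencil $(P_n^{(j)},P_n^{(j+1)})$, hence takes values among the eigenvalues of $\bigl(P_n^{(j+1)}\bigr)^{-1}P_n^{(j)}$ and, more generally, lies between the extreme such eigenvalues. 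The quantitative hypothesis will be used to restrict $x$ to a subspace on which each factor is controlled by $\beta_j$ from above (resp. by $\alpha_j$ from below).

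First I would record the underlying linear-algebra fact: if a symmetric definite pencil $(B,C)$ has at most $r^{+}$ eigenvalues exceeding $\beta$, then there is a subspace $W^{+}$ of codimension at most $r^{+}$ on which $x^{*}Bx\le\beta\,x^{*}Cx$; symmetrically, at most $r^{-}$ eigenvalues below $\alpha$ yields a subspace $W^{-}$ of codimension at most $r^{-}$ on which $x^{*}Bx\ge\alpha\,x^{*}Cx$. This is proved by diagonalizing the symmetric matrix $C^{-1/2}BC^{-1/2}$, taking the span of the eigenvectors attached to the non-outlying eigenvalues, and transporting it back by the invertible congruence $x=C^{-1/2}y$, which preserves dimension and turns $y^{*}(C^{-1/2}BC^{-1/2})y$ into $x^{*}Bx$ and $y^{*}y$ into $x^{*}Cx$.

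Applying this to each pair $(P_n^{(j)},P_n^{(j+1)})$, $j=0,\dots,l-1$, yields subspaces $W_j^{+}$ of codimension at most $r_j^{+}$ on which the $j$-th factor above is $\le\beta_j$. Their intersection $W^{+}=\bigcap_{j=0}^{l-1}W_j^{+}$ has codimension at most $r^{+}=\sum_j r_j^{+}$, and the telescoping identity (multiplying positive quantities) gives $x^{*}A_nx\le\beta\,x^{*}P_nx$ for every nonzero $x\in W^{+}$, with $\beta=\prod_j\beta_j$. A standard dimension count closes the upper estimate: if $P_n^{-1}A_n$ had at least $r^{+}+1$ eigenvalues larger than $\beta$, the span $V$ of the corresponding generalized eigenvectors of the pencil $(A_n,P_n)$ would have dimension $r^{+}+1$ and satisfy $x^{*}A_nx>\beta\,x^{*}P_nx$ on $V\setminus\{0\}$; but $\dim V+\dim W^{+}\ge(r^{+}+1)+(n-r^{+})>n$, so $V\cap W^{+}\neq\{0\}$, a contradiction. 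The lower bound $\alpha=\prod_j\alpha_j$, with at most $r^{-}=\sum_j r_j^{-}$ outliers below it, follows verbatim from the $W_j^{-}$ and the reversed inequalities, while $\alpha\le\beta$ is immediate from $\alpha_j\le\beta_j$.

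The argument is essentially routine: the only delicate point is the bookkeeping of codimensions under intersection of the $W_j^{\pm}$ and the correct pairing of ``small codimension'' versus ``large dimension'' subspaces in the final Courant--Fischer-type contradiction. If one prefers, the case $l=2$ can be proved exactly as above and the general case obtained by induction on $l$, applying the $l=2$ statement to $A_n=P_n^{(0)}$, $P_n^{(1)}$ and $P_n=P_n^{(l)}$ together with the inductive hypothesis for the chain $P_n^{(1)},\dots,P_n^{(l)}$; the inductive step is just the $l=2$ case, so no new difficulty appears.
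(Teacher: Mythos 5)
Your proof is correct, and it is essentially the same argument as the paper's: the telescoping Rayleigh-quotient identity, the passage to the "good" subspaces obtained by removing outlier eigendirections of each $\left(P_n^{(j+1)}\right)^{-1/2}P_n^{(j)}\left(P_n^{(j+1)}\right)^{-1/2}$ and transporting by $\left(P_n^{(j+1)}\right)^{-1/2}$ (your $W^{\pm}$ are exactly the paper's $L(\pm)$), and a codimension count combined with Courant--Fischer. The only cosmetic difference is that the paper bounds $\lambda_k$ directly for $k\in\{r^{+}+1,\dots,n-r^{-}\}$ via the min--max and max--min formulae, whereas you phrase the conclusion as a contradiction, but the substance is identical.
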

\proof
Let
\[
\lambda_1\ge \lambda_2 \ge \cdots \ge \lambda_n>0
\]
be the eigenvalues of  $P_n^{-1} A_n$  and let $k\in
\{r^++1,\ldots,n-r^-\}$.  Then it suffices  to prove that
$\lambda_k \in  [\alpha,\beta]$. To this end, we make use of
min-max and max-min characterization of the eigenvalues of
Hermitian matrices and we can use this argument since $P_n^{-1}
A_n$ is similar to the Hermitian (indeed positive definite) matrix
$P_n^{-1/2} A_nP_n^{-1/2}$. Hence
\begin{equation}\label{max-min}
\lambda_k= \max_{\textup{dim}(V)=k} \min_{v\in V, v\neq 0} \frac{v^* A_n v}{v^* P_n v},
\end{equation}
\begin{equation}\label{min-max}
\lambda_k= \min_{\textup{dim}(V)=n+1-k} \max_{v\in V, v\neq 0} \frac{v^* A_n v}{v^* P_n v}.
\end{equation}
Now, for every $j=0,\ldots,l-1$, set
\[
Q_j=\left(P_n^{(j+1)}\right)^{-1/2} P_n^{(j)} \left(P_n^{(j+1)}\right)^{-1/2},
\]
consider the subspaces $F_j(-)$ spanned by the $r_j^-$
eigenvectors of $Q_j$  related to the eigenvalues which are  less
than $\alpha_j$,and  $F_j(+)$ spanned by the $r_j^+$ eigenvectors
of $Q_j$ for which the correspondent eigenvalues are greater than
$\beta_j$. Then, we define:
\begin{equation}\label{important subspace -}
L(-)=\bigcap_{j=0}^{l-1} \left(P_n^{(j+1)}\right)^{-1/2} [F_j(-)]^\perp,
\end{equation}
\begin{equation}\label{important subspace +}
L(+)=\bigcap_{j=0}^{l-1} \left(P_n^{(j+1)}\right)^{-1/2} [F_j(+)]^\perp.
\end{equation}
By the assumptions, the subspaces $F_j(-)$ and  $F_j(+)$   have
dimension $r_j^-$ and $r_j^+$, respectively. Thus,
$[F_j(-)]^\perp$ and  $\left(P_n^{(j+1)}\right)^{-1/2}
[F_j(-)]^\perp$ have dimension $n-r_j^-$ while $ [F_j(+)]^\perp$
and  $\left(P_n^{(j+1)}\right)^{-1/2} [F_j(+)]^\perp$ have
dimension $n-r_j^+$. In conclusion, the subspaces $L(-)$ and
$L(+)$ defined in (\ref{important subspace -}) and (\ref{important
subspace +}), respectively, have dimensions larger than $n-r^-$
and $n-r^+$, respectively  with $r^-=\sum_{j=0}^{l-1} r_j^{-}$,
$r^+=\sum_{j=0}^{l-1} r_j^{+}$. Since the dimension of such
subspaces is large enough, we deduce that $V\cap L(-)$ and $V \cap
L(+)$ are non trivial (they have dimension at least equal to $1$),
with $V$ being any subspace reported in (\ref{max-min}) and
(\ref{min-max}). Therefore
\begin{eqnarray*} 
\lambda_k & = & \max_{\textup{dim}(V)=k} \min_{v\in V, v\neq 0} \frac{v^* A_n v}{v^* P_n v} \\
                 & \le &  \max_{\textup{dim}(V)=k} \min_{v\in V\cap L(+), v\neq 0} \frac{v^* A_n v}{v^* P_n v} \\
 & = &         \max_{\textup{dim}(V)=k} \min_{v\in V\cap L(+), v\neq 0} \prod_{j=0}^{l-1}
\frac{v^* P_n ^{(j)}v}{v^* P_n^{(j+1)} v} \\
 & \le &  \prod_{j=0}^{l-1} \beta_j =\beta,
\end{eqnarray*}
\begin{eqnarray*} 
\lambda_k & = & \min_{\textup{dim}(V)=n+1-k} \max_{v\in V, v\neq 0} \frac{v^* A_n v}{v^* P_n v}\\
& \ge &  \min_{\textup{dim}(V)=n+1-k} \max_{v\in V\cap L(-), v\neq 0} \frac{v^* A_n v}{v^* P_n v} \\
& = & \min_{\textup{dim}(V)=n+1-k} \max_{v\in V\cap L(-), v\neq 0} \prod_{j=0}^{l-1}
\frac{v^* P_n ^{(j)}v}{v^* P_n^{(j+1)} v} \\
& \ge & \prod_{j=0}^{l-1} \alpha_j =\alpha,
\end{eqnarray*}
and the proof is concluded.
\ \hfill $\bullet$

\section{The spectrum of $\{\tau_n^{-1}(f)T_n(f)\}_n$}\label{sec:main}

The main theoretical result  concerning the ill-conditioned Toeplitz sequences
and the   proposed $\tau$ preconditioners is stated below.
\begin{theorem}\label{th:main}

Let $f$ be   the generating function  of $T_n(f)$ having a single
zero at zero of order $\theta \in \mathbb{R}^+$  and let
$\tau_n(f)$ be the related $\tau$ matrix as defined in
(\ref{prec_def}). The following facts hold:
\begin{enumerate}
\item  if $\theta \in [0,2]$, then there exist constants $c, C>0$
independent of the dimension $n$, so that  $c\leq
\lambda_{i}(\tau^{-1}_n(f)T_n(f))\leq C$ for every $i,n$, i.e.,
the sequences $\{\tau_n(f)\}_n$ and $\{T_n(f)\}_n$ are spectrally
equivalent.
 \item if $\theta \in (2,\infty)$ then there exist a constant $c>0$
 and a positive number $m$ such that  $c\leq \lambda_{i}(\tau^{-1}_n(f)T_n(f))$
 for every $i,n$. Moreover, at most $m$ eigenvalues
 of this preconditioned matrix can grow to infinity.
 Hence, the essential spectral equivalence between  $\{\tau_n(f)\}_n$ and $\{T_n(f)\}_n$ holds.
 \end{enumerate}
     \end{theorem}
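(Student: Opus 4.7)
My plan is to handle the two regimes of $\theta$ separately and, in both cases, to isolate the ill-conditioning at the origin into the canonical factor $f_0(x)=(2-2\cos x)^{\theta/2}$.

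\emph{Part 1} ($\theta\in(0,2]$). Since $f$ has a single zero at $0$ of exact order $\theta$ and $(2-2\cos x)^{\theta/2}\sim x^\theta$ near $0$, I would first write $f=f_0\cdot h$ with $h$ continuous and $0<m\le h\le M$ on $[0,2\pi]$. Using that $g_1\le g_2$ pointwise implies $T_n(g_1)\le T_n(g_2)$ and $\tau_n(g_1)\le\tau_n(g_2)$ in the Loewner order (both Toeplitz and $\tau$ are positive, monotone, linear operators on the symbol), I obtain $mT_n(f_0)\le T_n(f)\le MT_n(f_0)$ and the same inequality for $\tau_n$. A Rayleigh quotient comparison then bounds the eigenvalues of $\tau_n^{-1}(f)T_n(f)$ by $M/m$ times those of $\tau_n^{-1}(f_0)T_n(f_0)$, reducing the statement to spectral equivalence of $\tau_n(f_0)$ and $T_n(f_0)$. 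For this reduced problem I would invoke the block Toeplitz tool of Section \ref{sec:tool1}: construct a $2\times 2$ pointwise positive semidefinite matrix symbol $F_\theta$ whose associated $B_{2n}(F_\theta)$ places $T_n(f_0)$ on one diagonal block and reproduces $\tau_n(f_0)$, modulo a benign boundary correction, on the other. A Schur complement argument then translates this matrix positivity into the desired two-sided Loewner comparison $c\tau_n(f_0)\le T_n(f_0)\le C\tau_n(f_0)$.

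\emph{Part 2} ($\theta>2$). I would apply multiple step preconditioning. Let $k\ge 1$ be the largest integer with $2k<\theta$, set $\theta'=\theta-2k\in(0,2]$, and factor $f=p\cdot g$ with $p(x)=(2-2\cos x)^k$ (a nonnegative trigonometric polynomial of degree $k$ with zero of even order $2k$ at the origin) and $g=f/p$ continuous, nonnegative, with a single zero of order $\theta'$ at the origin. Two ingredients then become available: the classical essential spectral equivalence of $\tau_n(p)$ and $T_n(p)$ for the polynomial factor (with outliers only above the interval), and, from Part 1 just established, the spectral equivalence of $\tau_n(g)$ and $T_n(g)$. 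Because the $\tau$ algebra is commutative, $\tau_n(p)\tau_n(g)=\tau_n(pg)=\tau_n(f)$ holds identically, so I can set up a short chain of positive definite preconditioners interpolating between $P_n^{(0)}=T_n(f)$ and $P_n^{(l)}=\tau_n(f)$ in which the polynomial transition contributes at most constantly many upper outliers and the remaining transitions contribute none. Theorem \ref{th:gen} multiplies the constants and sums the outlier counts to deliver the essential spectral equivalence, with all outliers located above the resulting interval.

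The principal obstacle is the construction of the block symbol $F_\theta$ in Part 1 for non-integer $\theta$. For even integer orders one can exploit explicit polynomial factorizations of $f_0$, but for $\theta\in(0,2)$ non-integer the symbol $f_0=(2-2\cos x)^{\theta/2}$ is not a trigonometric polynomial, so the off-diagonal entries of $F_\theta$ must be built from fractional powers of $2-2\cos x$ while keeping $F_\theta$ pointwise positive semidefinite and producing a sharp enough identification of $\tau_n(f_0)$ inside $B_{2n}(F_\theta)$. The upper Loewner bound $T_n(f_0)\le C\tau_n(f_0)$ is the easier direction; the lower bound, which bounds $\lambda_{\min}(\tau_n^{-1}(f_0)T_n(f_0))$ away from zero, is what genuinely uses $\theta\le 2$ and will be the pivotal technical step of the proof.
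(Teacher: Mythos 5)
There is a genuine gap, and it sits exactly where you flag it: the construction of the block symbol $F_\theta$ in Part 1 is not carried out, and the route you sketch is unlikely to close. You want $B_{2n}(F_\theta)$ to ``reproduce $\tau_n(f_0)$, modulo a benign boundary correction'' in a diagonal block, but $\tau_n(f_0)$ is not a Toeplitz matrix: it differs from $T_n(f_0)$ by a Hankel-type correction which, for a non-polynomial symbol such as $(2-2\cos x)^{\theta/2}$ with $\theta$ non-integer, is neither low rank nor of controlled sign, so it is not ``benign''. The paper avoids this entirely: its block Toeplitz matrices have only Toeplitz blocks, with symbol $\left(\begin{smallmatrix}|t|^{\theta_1} & |t|^{\hat\theta}\\ |t|^{\hat\theta} & |t|^{\theta_2}\end{smallmatrix}\right)$, $\hat\theta=(\theta_1+\theta_2)/2$, which is pointwise positive semidefinite because its determinant vanishes; the Schur complement gives $T_n(|t|^{\theta_2})\ge T_n(|t|^{\hat\theta})T_n^{-1}(|t|^{\theta_1})T_n(|t|^{\hat\theta})$, and the $\tau$ matrices enter only by pre- and post-multiplying this Loewner inequality and manipulating Rayleigh quotients, yielding $\rho(\tau_n(|t|^{-\hat\theta})T_n(|t|^{\hat\theta}))^2\le C_1C_2$. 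The case of general $\theta\in(0,2)$ is then obtained not by a single construction but by bisection from the anchors $\theta=0$ and $\theta=2$ (the latter from the known natural-$\tau$ result), plus density of the dyadic/rational exponents and continuity of the matrices and of the spectrum in $\theta$. Your reduction $f=f_0\,h$ via monotonicity of $T_n$ and $\tau_n$ in the symbol is fine, but it only moves the problem to $f_0$; without the interpolation idea the heart of Part 1 is missing.

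The second gap is the lower bound. The theorem asserts $c\le\lambda_i(\tau_n^{-1}(f)T_n(f))$ for \emph{all} $i$ and \emph{all} $\theta>0$: even in the regime $\theta>2$ there are no outliers below the interval. Your Part 2 never establishes this. Theorem \ref{th:gen} only propagates whatever lower-outlier counts the individual transitions have, and the transition that swaps $T_n$ for $\tau_n$ on the polynomial factor is justified by a low-rank perturbation argument; a low-rank, indefinite perturbation can a priori push a bounded number of eigenvalues below the interval, so asserting that ``the remaining transitions contribute none'' is unsupported. Moreover your closing remark that the lower bound ``genuinely uses $\theta\le 2$'' points in the wrong direction: in the paper the uniform lower bound is proved for every $\theta>0$ by a separate, direct argument (step c)): expand $z^TT_n(|t|^\theta)z$ via the Fourier integrals into a cosine part and a sine part, observe that the sine part is asymptotically comparable, through the trapezoidal rule, to the quadratic form $z^T\tau_n(|t|^\theta)z$, and conclude by the asymptotic analysis of Lemma 3.4 in the cited reference. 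Your multiple-step chain for $\theta>2$ is otherwise in the same spirit as the paper's (which uses an explicit $l=4$ chain $T_n(|t|^\theta)\to T_n((2-2\cos t)^k|t|^r)\to \tau_n((2-2\cos t)^k)T_n(|t|^r)\to\tau_n((2-2\cos t)^k|t|^r)\to\tau_n(|t|^\theta)$), but it needs both the explicit rank bound for the Toeplitz-to-$\tau$ swap and the independent lower-bound argument to deliver the statement as claimed.
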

     \begin{proof}
     First, we recall that when $\theta=0$,  the generating
     function is strictly positive and so the spectrum  $\sigma(\tau^{-1}_n(f)T_n(f))$
     is bounded from below and above by constants $c,C>0$ independent of the dimension $n$,
     since both matrices are bounded from below and above by constants far away from zero and infinity.
     The same holds true also when $\theta=2$ since  $f$ is equivalent to $g_1(t)=2-2\cos(t)$
     in the sense that there exist  $k_1,k_2>0$ for which
     \bdm k_1 g(t)\leq f(t)\leq k_2g(t)\qquad \forall t.\edm
 Then, it is known from \cite{F.Ben:95} that for the natural $\tau$ preconditioner, $\tau^{nat}(f)$, the following inequalities
 \bdm c_1<\sigma([\tau^{nat}_n(g_1)]^{-1}T_n(f))<c_2\qquad c_1,c_2>0 \edm holds true. So  \bdm \hat{c}_1<\sigma(\tau_n^{-1}(f)T_n(f))<\hat{c}_2 \edm
 since
 \bdm \frac{x^T T_n(f)x}{x^T\tau^{nat}_n(g_1)x}= \frac{x^T T_n(f)x}{x^T\tau_n(f)x}\frac{x^T\tau_n(f)x}{x^T\tau^{nat}_n(g_1)x} \edm
and the second term on the right part is bounded far away from zero and infinity, owing to the equivalence of $g_1$ and $f$.

In the case where $\theta=4$, $f\sim g_2$ with
$g_2(t)=(2-2\cos(t))^2$. Following  again  the  above   analysis
and knowing from  \cite{F.Ben:95}  that the preconditioned  matrix
$[\tau_n^{nat}(g_2)]^{-1}T_n(f)$ has at most 2 eigenvalues growing
to infinity, we conclude that $\tau^{-1}_n(f)T_n(f)$ will also
have at most 2 eigenvalues growing to infinity as $n\rightarrow
\infty$. For the convenience of the reader we decouple the
complete proof  into the following  three  parts:
     \begin{description}
     \item [a)] the maximum eigenvalue of   $\tau^{-1}_n(f)T_n(f)$ is bounded, when $\theta \in[0,2]$;
     \item [b)] at most a constant number of eigenvalues of $\tau^{-1}_n(f)T_n(f)$ can tend to infinity, when $\theta \in (2,\infty)$;
     \item [c)] the minimum eigenvalue of  $\tau^{-1}_n(f)T_n(f)$ is bounded from below by a constant independent of $n$, when $\theta$ is a real positive number.
     \end{description}

{\bf Proof of step a)\,\,} We consider the symmetric  positive semidefinite matrix-valued function \bdm F(t)=\left(\begin{array}{cc}  1 & |t|\\ |t|& t^2 \end{array}\right)\edm
Then, the generated block Toeplitz matrix
\bdm B_{2n}(F(t))=\left(\begin{array}{cc}T_n(1) & T_n(|t|)\\ T_n(|t|)& T_n(t^2) \end{array}\right) \edm
is  positive semidefinite and  so is its Schur complement
\bdm
 S=T_n(t^2)-T_n(|t|)T_n(|t|)\geq 0\Leftrightarrow T_n(t^2)\geq T_n(|t|)T_n(|t|)\edm
 where the symbol $"\geq " $ stands for the partial ordering in the space of Hermitian matrices
 (i.e. $A\ge B$ if and only if $A$ and $B$ are both Hermitian and $A-B$ is positive semidefinite).
 Pre and post multiplying the above inequality with  the positive definite $\tau$ matrix $\tau_n(|t|^{-1})$, by the inertia law, we get
 \bdm \tau_n(|t|^{-1})T_n(t^2)\tau_n(|t|^{-1})\geq \tau_n(|t|^{-1})T_n(|t|)T_n(|t|)\tau_n(|t|^{-1}).\edm
 The matrix in the left hand side of the inequality above is similar to the preconditioned matrix $\tau_n(t^{-2})T_n(t^2)$.
 This matrix has bounded spectrum, since it corresponds to the case of $\theta=2$. Thus, taking the spectral radii in both sides, we deduce that
 \beax
 C&\geq& \rho(\tau_n(|t|^{-1})T_n(t^2)\tau_n(|t|^{-1}))\geq \tau_n(|t|^{-1})T_n(|t|)T_n(|t|)\tau_n(|t|^{-1})\\
 &=&\| \tau_n(|t|^{-1})T_n(|t|)\|_2^2\geq \rho(\tau_n(|t|^{-1})T_n(|t|))^2.
 \eeax
 Thus the maximum eigenvalue of $\tau_n(|t|^{-1})T_n(|t|)$ is bounded from above by the constant $\sqrt{C}$.

Even though this is a special case and  the considered procedure
furnishes the upper bound for the concrete case of $\theta=1$,
the idea can be easily generalized to cover any $\theta\in (0,2)$.

Let us assume that
$\rho(\tau_n(|t|^{-\theta_1})T_n(|t|^{\theta_1}))\leq C_1$ and
$\rho(\tau_n(|t|^{-\theta_2})T_n(|t|^{\theta_2}))\leq C_2$  for
some $\theta_1, \theta_2 \in [0,2]$. Let also $\hat{\theta}$ be
the arithmetic mean of $\theta_1, \theta_2$, i.e., $
\hat{\theta}=\frac{\theta_1+ \theta_2}{2}$. Then, \bdm
F(t):=\left(\begin{array}{cc}  |t|^{\theta_1} &
|t|^{\hat{\theta}}\\ |t|^{\hat{\theta}} & |t|^{\theta_2}
\end{array}\right)\geq 0 \Rightarrow
B_{2n}(F(t)):=\left(\begin{array}{cc}T_n(|t|^{\theta_1}) &
T_n(|t|^{\hat{\theta}})\\  T_n(|t|^{\hat{\theta}})&
T_n(|t|^{\theta_2})\end{array}\right)\geq 0.\edm Hence, the Schur
complement of the above block Toeplitz matrix should be positive
semidefinite, a fact that is translated into the relation \bdm
T_n(|t|^{\theta_2})\geq
T_n(|t|^{\hat{\theta}})T^{-1}_n(|t|^{\theta_1})
T_n(|t|^{\hat{\theta}}). \edm Consequently, we  pre and post
multiply both sides by the positive definite matrix
$\tau_n(|t|^{-\frac{\theta_2}{2}})$ and we use the Rayleigh
quotients to get \bea \label{rayleigh_ineq} C_2&\geq&
\frac{y^T\tau_n(|t|^{-\frac{\theta_2}{2}}) T_n(|t|^{\theta_2})
\tau_n(|t|^{-\frac{\theta_2}{2}})y}{y^Ty}\\ &\geq& \frac{y^T
\tau_n(|t|^{-\frac{\theta_2}{2}})
T_n(|t|^{\hat{\theta}})T^{-1}_n(|t|^{\theta_1})
T_n(|t|^{\hat{\theta}})\tau_n(|t|^{-\frac{\theta_2}{2}})y
}{y^Ty}.\nonumber \eea We multiply and divide the last term in the
inequality above by the quantity $z^Tz$, where
$z=\tau_n(|t|^{-\frac{\theta_1}{2}})T_n(|t|^{\hat{\theta}})\tau_n(|t|^{-\frac{\theta_2}{2}})y$.
Then, this term can be written as \bdm
\frac{z^T\tau_n(|t|^{\frac{\theta_1}{2}})T^{-1}_n(|t|^{\theta_1})\tau_n(|t|^{\frac{\theta_1}{2}})z}{z^Tz}\cdot
\frac{z^Tz}{y^Ty}. \edm For the first Rayleigh quotient we have
\bdm
\frac{z^T\tau_n(|t|^{\frac{\theta_1}{2}})T^{-1}_n(|t|^{\theta_1})\tau_n(|t|^{\frac{\theta_1}{2}})z}{z^Tz}
\geq\frac{1}{\rho(\tau_n(|t|^{-\theta_1})T_n(|t|^{\theta_1}))}\geq\frac{1}{C_1}.
\edm We substitute it into inequalities (\ref{rayleigh_ineq}) and
we infer \bea \nonumber
C_2C_1&\geq& \frac{z^Tz}{y^Ty}\\
            & = & \frac{y^T \tau_n(|t|^{-\frac{\theta_2}{2}})
            T_n(|t|^{\hat{\theta}}) \tau_n(|t|^{-\frac{\theta_1}{2}}) \tau_n(|t|^{-\frac{\theta_1}{2}})
             T_n(|t|^{\hat{\theta}}) \tau_n(|t|^{-\frac{\theta_2}{2}})y }{y^Ty}.
\eea This inequality holds also true if we take as $y$ the
eigenvector $x$  corresponding to the spectral radius
$\rho(\tau_n(|t|^{-\frac{\theta_2}{2}})    T_n(|t|^{\hat{\theta}})
\tau_n(|t|^{-\frac{\theta_1}{2}})
\tau_n(|t|^{-\frac{\theta_1}{2}})    T_n(|t|^{\hat{\theta}})
\tau_n(|t|^{-\frac{\theta_2}{2}}))$. Thus \bea
\label{rayleigh_ineq2} C_2C_1&\geq&
\rho(\tau_n(|t|^{-\frac{\theta_2}{2}})    T_n(|t|^{\hat{\theta}})
\tau_n(|t|^{-\frac{\theta_1}{2}})
\tau_n(|t|^{-\frac{\theta_1}{2}})    T_n(|t|^{\hat{\theta}})
\tau_n(|t|^{-\frac{\theta_2}{2}}))
\nonumber\\&=&\|\tau_n(|t|^{-\frac{\theta_1}{2}})
T_n(|t|^{\hat{\theta}})
\tau_n(|t|^{-\frac{\theta_2}{2}})\|_2^2\geq
\rho(\tau_n(|t|^{-\frac{\theta_1}{2}})    T_n(|t|^{\hat{\theta}})
\tau_n(|t|^{-\frac{\theta_2}{2}}))^2\\&=&
\rho(\tau_n(|t|^{-\hat{\theta}})T_n(|t|^{\hat{\theta}}))^2
\nonumber \eea and hence we have proven that the spectral radius
of the preconditioned matrix
$\tau_n(|t|^{-\hat{\theta}})T_n(|t|^{\hat{\theta}})$ has an upper
bound the constant $\sqrt{C_2C_1}$.

Starting from $\theta_1=0, \theta_2=2$, we proved the bound for
$\theta=1$. Following the very same procedure, we prove the bound
for $\theta=\frac{1}{2}$ and $\theta=\frac{3}{2}$ and so on.
Finally, we can prove the same property for every $\theta$
rational number in  $(0, 2)$ and with the important observation
that the bound does not depend on the given rational number:
indeed, when dealing with the case $\theta=1$, the bound is the
geometric mean of the bounds for $\theta=0$ and $\theta=2$ so that
it does not exceed the maximum of the two bounds; by iterating the
procedure the same observation is still true. Furthermore, since
the set of rational numbers is dense in the set of real numbers,
the same property is proven for all $\theta \in (0, 2)$, because
of the continuity of the matrices $\tau_n(|t|^{-\theta})$ and
$T_n(|t|^{{\theta}})$ with respect to the parameter $\theta$ and
because of the continuity of the spectrum with respect to the
matrix coefficients.
\ \\
{\bf Proof of step b)\,\,} We use Theorem \ref{th:gen} with $l=4$.
More precisely, taking into account that $\theta>2$, we write
$\theta=2k+r$, $k\ge 1$ integer, $r\in [0,2)$, and we  define the
following $l$ step preconditioning:
\begin{eqnarray*}
A_n & = & P_n^{(0)}=T_n(|t|^\theta), \\
P_n^{(1)} & = &   T_n((2-2\cos(t))^k|t|^r), \\
P_n^{(2)} & = &   \tau_n((2-2\cos(t))^k) T_n(|t|^r), \\
P_n^{(3)} & = &   \tau_n((2-2\cos(t))^k |t|^r), \\
P_n^{(4)} & = &   \tau_n((|t|^\theta)=P_n.
\end{eqnarray*}

Now $\left\{P_n^{(1)}\right\}$ and $\left\{A_n=P_n^{(0)}\right\}$ are spectrally equivalent and the eigenvalues of
$\left\{\left(P_n^{(1)}\right)^{-1} P_n^{(0)}\right\}$ belong to the interval $(r,R)$, with
\begin{equation}\label{bounds-toeplitz}
r=\min_{t\in [0,2\pi]} \frac{|t|^{2k}}{(2-2\cos(t))^k},
\ \ \
R=\max_{t\in [0,2\pi]} \frac{|t|^{2k}}{(2-2\cos(t))^k}.
\end{equation}
$\left\{P_n^{(2)}\right\}$ and $\left\{P_n^{(1)}\right\}$ are essentially spectrally equivalent and indeed their difference
has rank bounded by a quantity proportional to $k$, while the analysis of $\left\{P_n^{(3)}\right\}$ and $\left\{P_n^{(2)}\right\}$ reduces to the one performed in {\bf step b)}. Finally
$\left\{P_n=P_n^{(4)}\right\}$ and $\left\{P_n^{(3)}\right\}$ are spectrally equivalent and the eigenvalues of
$\left\{\left(P_n^{(4)}\right)^{-1} P_n^{(3)}\right\}$ belong to the interval $[1/R,1/r]$, with $r,R$ defined in
(\ref{bounds-toeplitz}).

The use of Theorem \ref{th:gen} leads to the desired conclusion.
\ \\
{\bf Proof of step c)\,\,} We will prove that
$\lambda_{\min}(\tau^{-1}(|t|^{\theta})T(|t|^{\theta}))>m$  with
constant $m$ independent of $n$, by proving that for every
normalized vector $z\in \mathbf{R}^n$, the corresponding Rayleigh
quotient  $\frac{z^TT_n(|t|^{\theta})z}{z^T \tau(t^{\theta})z } $
is  bounded from below by $m$. Using (\ref{prec_def}) and
(\ref{sine_matrix}) and making some simple manipulations,  we
obtain that the denominator $D$ of the above ratio can be written
as \bdm
D=z^T\tau_n(|t|^{\theta})z=(Sz)^TD(Sz)=\frac{2}{n+1}\sum_{k=1}^n\left(\frac{k\pi}{n+1}\right)^{\theta}
\left(\sum_{j=1}^n\sin{(\frac{jk\pi}{n+1})}z_j\right)^2\edm while
the numerator $N$ can be expanded as \bdm
z^TT_n(|t|^{\theta})z=\sum_{k=1}^nz_k\sum_{j=1}^n
t_{k-j}z_j=\frac{1}{2\pi}\sum_{k=1}^nz_k\sum_{j=1}^n\int_{-\pi}^{\pi}|t|^{\theta}\cos{(k-j)}t\,
dt z_j. \edm
Using the  trigonometric identity
$\cos{(a-b)}=\cos{a}\cos{b}+\sin{a}\sin{b},$  we split the above
expression in two positive terms, $C$ and $S$, where: \bdm
C=\frac{1}{2\pi}\sum_{k=1}^nz_k\sum_{j=1}^n\left(\int_{-\pi}^{\pi}|t|^{\theta}\cos{(kt)}\cos{(jt)}\,
dt
\right)z_j=\frac{1}{\pi}\int_0^{\pi}t^{\theta}(\sum_{j=1}^n\cos{(jt)}z_j)^2\,
dt \edm and the \bdm
S=\frac{1}{2\pi}\sum_{k=1}^nz_k\sum_{j=1}^n\left(\int_{-\pi}^{\pi}|t|^{\theta}\sin{(kt)}\sin{(jt)}\,
dt
\right)z_j=\frac{1}{\pi}\int_0^{\pi}t^{\theta}(\sum_{j=1}^n\sin{(jt)}z_j)^2
\,dt\edm Using the trapezoidal rule we can see that the term $S$
is strongly related to the denominator since \bdm
\frac{1}{\pi}\sum_{k=0}^n\int_{k\frac{\pi}{n+1}}^{(k+1)\frac{\pi}{n+1}}
t^{\theta}\left(\sum_{j=1}^n\sin{(jt)}z_j\right)^2\, dt\approx
\frac{1}{n+1}\sum_{k=1}^n\left(\frac{k\pi}{n+1}\right)\left(\sum_{j=1}^n\sin{(\frac{jk\pi}{n+1})}z_j\right)^2.
\edm
Following an asymptotic analysis analogous to that of Lemma 3.4 in \cite{D.Nou/P.Vas:08}, we can bound the minimum eigenvalue by a universal positive constant.
\end{proof}
\ \\

Theorem \ref{th:main} deserves a few remarks. The first observation concerns  {\bf step b)},
where the procedure for giving an upper bound to the number of the outliers is indeed an effective algorithm
that could be numerically tested. The second remark concerns the non-optimal bound that {\bf step b)} induces:
in fact, as stressed by the numerical experiments, only two 
outliers show up when $\theta\in (2,4)$. For filling the gap, we
could employ the fine technique in {\bf step a)}: however our
initial attempts allowed to give a bound on the number of outlying
singular values and this does not lead to the desired result. A
possible way for overcoming this difficulty could be the use of
the Majorization Theory, concerning the moduli of the eigenvalues
and the singular values (see \cite{R.Bha} for an elegant and rich
treatment of this theory).

\section{Numerical Experiments}\label{sec:numexp}

  In this section we report  numerical examples that were conducted in order to point out
 the efficiency of the proposed preconditioners and to confirm
 the validity of the presented theory.    The experiments were carried out using  Matlab and in
 the examples where a linear system is involved the  righthand side vector is chosen as  $(1 ~1~
\cdots~1)^T$. Although we have run also our examples with the righthand side being random
vectors (and the results are essentially of the same type), we adopt the previous choice
in order to present a fair comparison with the  methods and numerical tests given in the relevant literature.
In all cases, the zero vector was chosen  as initial guess for the PCG method and the stopping criterion was the inequality
$\frac{\|r^{(j)}\|_2}{\|r^{(0)}\|_2} \leq 10^{-7}$, where
$r^{(j)}$ is the residual vector in the $j$th iteration.

In Figure \ref{fig:1} we give a snapshot of  the asymptotical
behavior of  the eigenvalues of $\tau_n^{-1}(f)T_n(f)$  where
$f(t)=|t|^3$ and the matrix $\tau_n(f)$ is constructed as in
(\ref{prec_def}). It is clear, and as the theory predict, that
from below the minimum eigenvalue of the sequence
$\{\tau_n^{-1}(f)T_n(f)\}_n$ is bounded by a constant, the main
mass of them is clustered around one while at most two of them
seem to tend to infinity.
\begin{center}
\begin{figure}[!h]
\begin{center}
\includegraphics[width=4.1in, height=6cm]{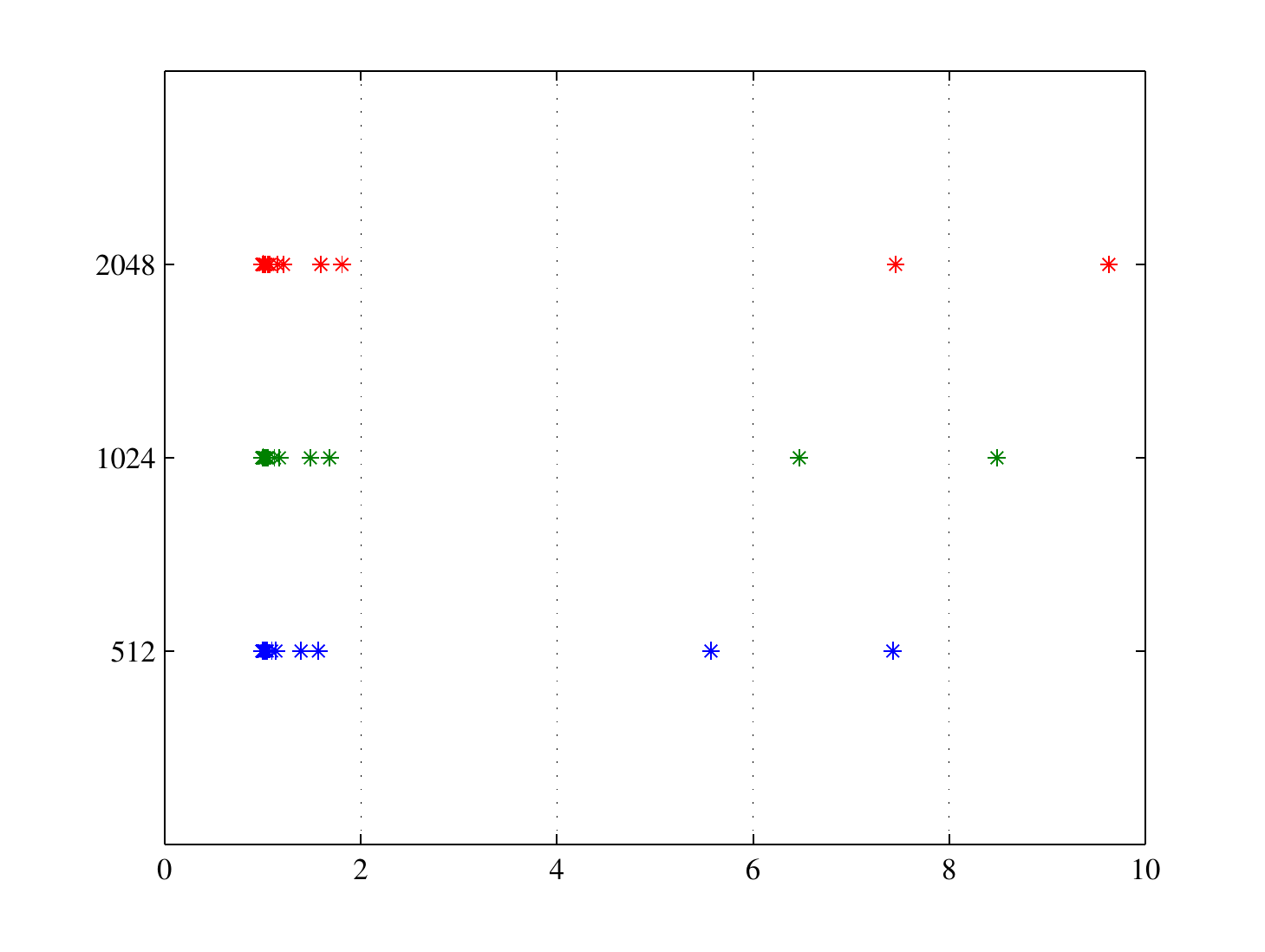} \caption{ \small{ Spectrum of
$\tau_n(f)^{-1}T_n(f)$, where $f(t)=|t|^3$} }
\label{fig:1}
\end{center}
\end{figure}
\end{center}

In the next tables,  we display the performance of our proposed
preconditioner   applied to various ill-conditioned Toeplitz
systems. In all  cases, the coefficient matrix is generated by a
function with a unique zero at zero of  non-even order $\theta$.
As we have mentioned in the introduction, for these cases there is
no suitable optimal PCG method. A non-optimal proposal  is
presented in \cite{S.Ser:95:Cal} where the preconditioner is the
band Toeplitz matrix generated by the trigonometric polynomial
$(2-2\cos{(t)})^{2k}$ where the number $k$ is such that the
distance $|2k-\theta|$ is minimum. Following the convergence
analysis of the PCG method (see e.g. \cite{O.Axe/G.Lin:86}) and
the spectral behavior of the aforementioned  preconditioner
analyzed extensively in \cite{F.Ben/S.Ser:99}, we can easily
conclude that in our case it is better  to overestimate $\theta$
rather than to underestimate it. The reason is that in the latter
case $\it{O}(n)$ eigenvalues of the preconditioned matrix  will
tend to infinity, while in the first case  $\it{O}(n)$ eigenvalues
will tend to zero. We denote the  preconditioner proposed in
\cite{S.Ser:95:Cal} as $S$ while our preconditioner is shortly
indicated with the symbol $\tau$. For our experiments we have
chosen the following generating functions: \beax f_1(t)=|t|, \quad
f_2(t)=|t|^{\frac{7}{2}}, \quad f_3(t)=|t|^3, \quad
f_4(t)=|t|^{\frac{9}{2}}. \eeax The corresponding iterations are
reported on Tables \ref{tab:1}, \ref{tab:2}, \ref{tab:3},
\ref{tab:4}. For all the examples,  we remark that  the
unpreconditioned CG method requires a  number of iterations
exceeding 1000, even for moderate matrix-sizes like  $n=512$.

\begin{table}[h!]
\begin{center}
\caption{Number of iterations for $f(t)=|t|$, the extreme eigenvalues of $P_n^{-1}(f)T_n(f)$ and the number of unbounded  eigenvalues. } \label{tab:1}
\begin{tabular}{||c|c|c||c|c|c||}\hline\hline
  n &  S & $\tau$  &$\lambda_{\min}$ &$\lambda_{\max}$& $\sharp\{\lambda_i(P)\}>2 $ \\\hline
  256 & 33 & 6 &  0.61 & 1.04 & 0 \\
  512 & 44  & 6 & 0.60 &1.04 & 0  \\
  1024 & 63 & 6 & 0.59 & 1.04 & 0  \\
  2048 & 89 & 6 & 0.59 & 1.04 & 0 \\
  4096 &124  & 7 & 0.58 & 1.04 & 0 \\\hline\hline
\end{tabular}
\end{center}
\end{table}

\begin{table}[h!]

\begin{center}
\caption{Number of iterations for $f(t)=|t|^{3}$, the extreme eigenvalues of $P_n^{-1}(f)T_n(f)$ and the number of unbounded  eigenvalues. } \label{tab:2}
\begin{tabular}{||c|c|c||c|c|c||}\hline\hline
  n &  S & $\tau$  &$\lambda_{\min}$ &$\lambda_{\max}$& $\sharp\{\lambda_i(P)\}>2 $ \\\hline
  256 & 9& 34 & 1  & 6.4 &  2\\
  512 & 9  & 51 &1  & 7.4&  2 \\
  1024 & 9 & 78 & 1 & 8.5 & 2  \\
  2048 & 10 & 118 & 1 & 9.8  & 2  \\
  4096 &10  & 179 &1  & 11.2 & 2 \\\hline\hline
\end{tabular}
\end{center}
\end{table}

\begin{table}[h!]

\begin{center}
\caption{Number of iterations for $f(t)=|t|^{\frac{7}{2}}$, the extreme eigenvalues of $P_n^{-1}(f)T_n(f)$ and the number of unbounded  eigenvalues. } \label{tab:3}
\begin{tabular}{||c|c|c||c|c|c||}\hline\hline
  n &  S & $\tau$  &$\lambda_{\min}$ &$\lambda_{\max}$& $\sharp\{\lambda_i(P)\}>2 $ \\\hline
  256 & 20 & 9 &  1 & 32.2 & 2 \\
  512 & 24  & 10 & 1 &46.5 & 2  \\
  1024 & 31 & 10 & 1 & 66.9 & 2  \\
  2048 & 40 & 11 & 1 & 96.3 & 2  \\
  4096 & 52 & 11 & 1 & 137.8 & 2 \\\hline\hline
\end{tabular}
\end{center}
\end{table}

\begin{table}[h!]

\begin{center}
\caption{Number of iterations for $f(t)=|t|^{9/2}$, the extreme eigenvalues of $P_n^{-1}(f)T_n(f)$ and the number of unbounded  eigenvalues. } \label{tab:4}
\begin{tabular}{||c|c|c||c|c|c||}\hline\hline
  n &  S & $\tau$  &$\lambda_{\min}$ &$\lambda_{\max}$& $\sharp\{\lambda_i(P)\}>2 $ \\\hline
  256 &  45 & 10 & 0.77  &$1.1\times 10^3$  & 2 \\
  512 &  62 & 11 & 0.74 &$3.0\times 10^3$  &  2 \\
  1024 & 86 & 13 & 0.72 &$8.5\times 10^3$  &  2 \\
  2048 & 119 &14  & 0.70 &$2.4\times 10^4$  & 2  \\
  4096 & 165 & 14 & 0.69 & $6.8\times 10^4$ &  \\\hline\hline
\end{tabular}
\end{center}
\end{table}

An important point is that in all considered cases,  we only
observed 2 outliers, showing that there is room for theoretical
improvement in Theorem \ref{th:main}.

\section{Concluding remarks}\label{sec:final}

In previous works, the spectral equivalence of the matrix
sequences  $\{{\tau}_n(f)\}_n $ and $\{T_n(f) \}_n$ was proven
under the assumption that the symbol $f$ has a zero of order $2$
at zero: furthermore, if the order $\theta$ is an even number
larger than $2$, the essential spectral equivalence was proved.
Here we have expanded the previous result to any positive order
$\theta$, by showing that the spectral equivalence holds for
$\theta\le 2$ and the essential spectral equivalence can be proven
for every $\theta>2$.

A possible line of further research could concern   extending the
validity of the proposed idea also to other trigonometric matrix
algebras, (e.g., the circulant algebra) and the  multi-level case.
Obviously,  more difficulties are expected on this directions due
to the  facts that the $\tau$ algebra is closer in a rank sense to
the Toeplitz structure, when the generating function of the latter
is a even trigonometric polynomial, and, due to the negative
results that hold in the multidimensional case (see
\cite{S.Ser/E.Tyr:99}, \cite{D.Nou/S.Ser/P.Vas:04:TCS}).
Furthermore, concerning Theorem \ref{th:main}, the proof technique
used in step {\bf a)}  is rather precise for $\theta\in [0,2]$,
but it did not work for larger values of $\theta$: a further
investigation in this direction would be useful in order to prove
a precise bound on the number of outliers, since Theorem
\ref{th:gen} used in step {\bf b)} provides a non-optimal
estimate, as suggested  by the numerical tests.

\bibliographystyle{siam}
\bibliography{totbib_181009}

\end{document}